\documentclass[12pt,UKenglish]{amsart}
\usepackage[margin=25 mm,bottom=35mm,footskip=15mm,top=35mm]{geometry}
\usepackage{enumerate,url,amssymb,amsthm,amsmath,xstring,enumitem,hyperref,subcaption,floatrow,setspace}
\usepackage[UKenglish]{babel}
\usepackage[numbers]{natbib}
\usepackage{xcolor}
\usepackage{graphicx}
\usepackage[T1]{fontenc}

\usepackage{aliascnt}
\usepackage[capitalise,noabbrev]{cleveref}
\crefname{appsec}{Appendix}{Appendices}
\crefformat{equation}{#2(#1)#3}

\theoremstyle{plain}

\newtheorem{theorem}{Theorem}[section]

\newaliascnt{proposition}{theorem}

\aliascntresetthe{proposition}

\newaliascnt{lemma}{theorem}
\newtheorem{lemma}[lemma]{Lemma}
\aliascntresetthe{lemma}

\newaliascnt{claim}{theorem}

\aliascntresetthe{claim}

\newaliascnt{corollary}{theorem}

\aliascntresetthe{corollary}

\newaliascnt{conjecture}{theorem}

\aliascntresetthe{conjecture}

\newaliascnt{observation}{theorem}

\aliascntresetthe{observation}

\newtheorem*{question*}{Question}

\theoremstyle{definition}
\newaliascnt{definition}{theorem}

\aliascntresetthe{definition}

\newaliascnt{question}{theorem}

\aliascntresetthe{question}

\newaliascnt{example}{theorem}

\aliascntresetthe{example}

\newaliascnt{problem}{theorem}
\newtheorem{problem}[problem]{Problem}
\aliascntresetthe{problem}

\theoremstyle{remark}

\crefname{theorem}{Theorem}{Theorems}
\crefname{proposition}{Proposition}{Propositions}
\crefname{lemma}{Lemma}{Lemmas}
\crefname{claim}{Claim}{Claims}
\crefname{corollary}{Corollary}{Corollaries}
\crefname{conjecture}{Conjecture}{Conjectures}
\crefname{observation}{Observation}{Observations}
\crefname{definition}{Definition}{Definitions}
\crefname{question}{Question}{Questions}
\crefname{example}{Example}{Examples}
\crefname{problem}{Problem}{Problems}

\Crefname{theorem}{Theorem}{Theorems}
\Crefname{proposition}{Proposition}{Propositions}
\Crefname{lemma}{Lemma}{Lemmas}
\Crefname{claim}{Claim}{Claims}
\Crefname{corollary}{Corollary}{Corollaries}
\Crefname{conjecture}{Conjecture}{Conjectures}
\Crefname{observation}{Observation}{Observations}
\Crefname{definition}{Definition}{Definitions}
\Crefname{question}{Question}{Questions}
\Crefname{example}{Example}{Examples}
\Crefname{problem}{Problem}{Problems}

\newcommand{\xqed}[1]{%
	\leavevmode\unskip\penalty9999 \hbox{}\nobreak\hfill
	\quad\hbox{\ensuremath{#1}}}
\newcommand{\Endofdef}{\xqed{\lozenge}}

\title[Supersaturation of induced even cycles in locally sparse graphs]{Supersaturation of induced even cycles in locally sparse graphs}

\author[D\v{z}avoronok]{Adam D\v{z}avoronok}
\address{Department of Applied Mathematics, Charles University, Faculty of Mathematics and Physics, Malostransk\'e n\'am.~25, 118~00 Praha~1, Czech Republic}
\email{adam.dzavoronok@mff.cuni.cz}

\author[Gabsdil]{Ole Gabsdil}
\address{Department of Mathematics, EPFL, Switzerland}
\email{ole.gabsdil@epfl.ch}

\author[Mylet]{Alexander Mylet}
\address{Department of Computer Science, University of Oxford, Parks Road, Oxford, OX1 3QD, United Kingdom}
\email{alexander.mylet@sjc.ox.ac.uk}
\author[Popa]{Maria-Cristina Popa}
\address{Department of Mathematics, École Polytechnique, Institut Polytechnique de Paris, 91120 Palaiseau, France}

\email{maria-cristina.popa@polytechnique.edu}
\author[Shao]{Yinghan Andie Shao}
\address{Department of Mathematics, EPFL, Switzerland}
\email{yinghan.shao@epfl.ch}

\begin{document}
\begin{abstract}
A graph $\Gamma$ is \emph{$(c,t)$-sparse} for $c > 0$ and $t \ge 1$ if for every pair of vertex subsets $A, B \subseteq V(\Gamma)$ with $|A|, |B| \ge t$, the number of edges $e(A,B)$ between them satisfies
$ e(A,B) \le (1 - c)|A||B|$.
In this paper, we prove that for every integer $\ell\ge2$, there are $\varepsilon > 0, C, C' > 0$ such that if an $n$-vertex graph $\Gamma$ is $(1-\varepsilon,t)$-sparse for some $t$, and has at least $Ct^{1-1/\ell}n^{1+1/\ell}$ edges, then $\Gamma$ contains at least $C'n^2t^{2\ell-2}$ induced copies of $C_{2\ell}$.
This partially resolves a problem of Ding, Gao, Liu, Luan, and Sun.

\end{abstract}

	\maketitle

\section{Introduction}

For a graph $H$, the Tur\'an number $\operatorname{ex}(n,H)$ is the maximum number of edges in an $n$-vertex graph containing no copy of $H$.
One of the central topics studied in extremal graph theory is Tur\'an-type problems, of which induced variants have had some focus in recent years.
In the classical theory, the asymptotics are determined by the chromatic number of $H$. The Erd\H os--Stone--Simonovits theorem \cite{ErdosP1946Otso, ErdosP1966Alti} states that
\(
    \operatorname{ex}(n,H) = \left(1 - \frac{1}{\chi(H)-1} + o(1)\right)\binom{n}{2}
\)
for every graph $H$ with at least one edge, so that $\operatorname{ex}(n,H) = o(n^2)$ precisely when $H$ is bipartite.
The bipartite case is much less understood. In particular, for the even cycle $C_{2\ell}$  of length $2\ell$,
the  theorem of Bondy and Simonovits provides an upper bound $\operatorname{ex}(n,C_{2\ell})=O_{\ell} (n^{1+1/\ell})$~\cite{bondy-simonovits}, but matching lower bounds are known only in cases when $\ell=2,3$ or $5$ (see \cite{Wenger1991}).
Determining the exact asymptotic behaviour of $\operatorname{ex}(n,C_{2\ell})$ remains a long-standing open problem.

For induced copies no such statement is possible.
When the forbidden induced graph $H$ is not a clique, then the complete graph $K_n$ does not contain any induced copies of $H$, and so the question only becomes meaningful with an additional sparsity restriction.
Loh, Tait, Timmons, and Zhou \cite{LOHPO-SHEN2018ITN} introduced the question of determining the maximum number of edges in an $n$-vertex graph forbidding both an induced copy of $H$ and a copy of some graph $F$ (not necessarily induced).
The case of $F = K_{t,t}$ has been well-studied (see, among others, \cite{AxenovichMaria2025ITpi, BourneufRomain2024Opd, HunterZach2025Ktfh, BonamyMarthe2022DoPa,KuhnDaniela2004ISIK}). 
In particular, Hunter, Milojevi\'{c},
Sudakov, and Tomon \cite{HunterZach2025Ktfh} established that in a $K_{t,t}$-free graph with at least $C_{\ell,t}n^{1+1/\ell}$ edges, there is an induced copy of $C_{2\ell}$.

Fox, Nenadov, and Pham \cite{fox-nenadov-pham} initiated the systematic study of induced Tur\'an problems in $(c, t)$-sparse graphs.
A graph $G$ is called \emph{$(c,t)$-sparse} for $c > 0$ and $t \ge 1$ if for every pair of vertex subsets $A, B \subseteq V(G)$ with $|A|, |B| \ge t$, the number of edges $e(A,B)$ between them satisfies
\(
e(A,B) \le (1 - c)|A||B|
\).
$A$ and $B$ do not have to be disjoint, and we double count edges within $A \cap B$.
This notion generalises $K_{t,t}$-free graphs, since by averaging, $G$ is $K_{t,t}$-free if and only if it is $\left(\frac{1}{t^2}, t\right)$-sparse.
Fox, Nenadov, and Pham conjectured that for every $c > 0$ and integer $\ell$ there is a $C > 1$ such that if $\Gamma$ is a $(c,t)$-sparse graph for some $t$ with at least $Ct^{1-1/\ell}n^{1+1/\ell}$ edges, then $\Gamma$ contains an induced copy of $C_{2\ell}$.
This was proved by Ding et al.~\cite{ding2024induced}.

Another central question in the study of Tur\'an-type problems is to establish the corresponding supersaturation results: once the number of edges exceeds the extremal threshold by a fixed constant factor, how many copies of the forbidden graph must necessarily occur?
For the even cycle $C_{2\ell}$, in the classical case, this is a result of Simonovits \cite{ErdosP1984Csga}: for every integer $\ell \ge 2$, there are constants $C, c > 0$ such that every $n$-vertex graph $G$ with $e(G) \ge Cn^{1+1/\ell}$ contains at least  $c\left(\frac{e(G)}{n}\right)^{2\ell}$ copies of $C_{2\ell}$.
Moreover, the random graph of the same edge density proves to be a tight example up to the multiplicative constant.
More recently, Jiang and Yepremyan \cite{jiangYepremyan2020} reproved this in the more general setting of even linear cycles in linear $r$-graphs.
In the induced $(c,t)$-sparse setting, Fox, Nenadov, and Pham established supersaturation results for various classes of bipartite graphs~\cite{fox-nenadov-pham}.

Ding et al.\@ \cite{ding2024induced} posed the problem of proving, if true, the corresponding supersaturation result for induced copies of $C_{2\ell}$ under $(c,t)$-sparseness assumptions:
\begin{problem}\label{problem:the problem}
For every $c > 0$ and integer $\ell$, do there exist $C$ and $C'>0$ such that if an $n$-vertex graph $G$ is $(c,t)$-sparse for some $t$, and has at least $Ct^{1-1/\ell}n^{1+1/\ell}$ edges, then $G$ contains at least $C't^{2\ell-2}n^2$ induced copies of $C_{2\ell}$.
\end{problem}
Relatedly, Dong, Gao, Li, and Liu~\cite{DongGaoLiLiu2025} proved a similar tight supersaturation result for induced even cycles in sufficiently dense almost-regular \(K_{s,s}\)-free graphs with exponential dependence in $s$ in the number of necessary edges.
Furthermore, the $C_4$-case was fully resolved in $(c,t)$ sparse setting by Fox, Nenadov and Pham~\cite{fox-nenadov-pham}.
In this paper, we show that \cref{problem:the problem} is true for sufficiently sparse graphs:

\begin{theorem}\label{theorem:main theorem}
    For every integer $\ell\ge2$, there exist $\varepsilon > 0$, $C$ and $C' > 0$, all depending on $\ell$, such that if an $n$-vertex graph $\Gamma$ is $(1-\varepsilon, t)$-sparse for some $t$, and has at least $Ct^{1-1/\ell}n^{1+1/\ell}$ edges, then $\Gamma$ contains at least $C'd^{2\ell} 
   $ 
    induced copies of $C_{2\ell}$, where $d$
    is the average degree of $\Gamma$.
\end{theorem}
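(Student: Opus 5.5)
Our plan is to reduce \cref{theorem:main theorem} to a local count of induced $C_{2\ell}$'s through well-behaved paths, along the lines of Simonovits' classical supersaturation argument. We will then use the strong sparseness $(1-\varepsilon,t)$, which says every pair $A,B$ with $|A|,|B|\ge t$ spans at most $\varepsilon|A||B|$ edges, to show that most of the cycles we find are induced.

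\textbf{Step 1: almost-regular subgraph.} We first pass, by a standard Erd\H{o}s--Simonovits / Jiang--Seiver type regularisation, to a subgraph $G\subseteq\Gamma$ on $m$ vertices. We want $G$ to be $K$-almost-regular with degree $D\ge c_0 d\, (m/n)^{1/\ell}$ and $D\ge C_1 t^{1-1/\ell}m^{1/\ell}$. Counting induced copies in $\Gamma$ through copies in $G$ is only valid if we control chords, so we keep the ambient graph $\Gamma$ in mind when checking inducedness. Note that $G$ inherits $(1-\varepsilon,t)$-sparseness, as does every induced subgraph.

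\textbf{Step 2: homomorphic cycle count.} In $G$ the number of closed walks of length $2\ell$ is at least $m D^{2\ell}$ by the spectral bound $\mathrm{tr}(A^{2\ell})\ge m\lambda_1^{2\ell}$. The degenerate walks contribute $O_\ell(m D^{2\ell-1}\cdot \Delta_{\mathrm{codeg}})$, so we must bound codegrees. Here sparseness helps: two vertices $u,v$ with common neighbourhood $N$, $|N|\ge t$, give no contradiction by themselves. However, the density condition $D^\ell\gg t^{\ell-1}m$ shows that the number of $\ell$-paths between typical pairs is about $D^{\ell}/m\gg t^{\ell-1}$. The key quantity is therefore $P(u,v)$, the number of paths of length $\ell$ from $u$ to $v$. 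Cycles are obtained from ordered pairs of such paths, giving $\sum_{u,v}P(u,v)^2\ge (mD^\ell)^2/m^2=D^{2\ell}$. We aim to show that a constant fraction of these pairs are internally disjoint and chordless. The Simonovits/Jiang--Yepremyan machinery handles disjointness once $D\ge C m^{1/\ell}$. Since $D^{2\ell}$ is to be compared with $d^{2\ell}$, we also need $m$ not too small relative to $n$, or we must run a greedy deletion argument repeatedly. We plan to instead keep $m=n$ by deleting low-degree vertices, and use a Jiang--Yepremyan style weighting to handle irregularity.

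\textbf{Step 3: killing chords (main obstacle).} A cycle $v_0\dots v_{2\ell-1}$ has a chord $v_iv_j$ if two vertices at cyclic distance $\ge2$ are adjacent. For a fixed "position" of a chord, we count the cycles carrying it by splitting the cycle at the chord into two shorter closed walks, of lengths $a+1$ and $b+1$ with $a+b=2\ell$. We will bound these counts by applying sparseness to the sets $A=$ the set of vertices reached at step $i$ and $B=$ the set reached at step $j$ from a fixed root and branching structure. Whenever both sets have size at least $t$, the edges between them number at most $\varepsilon|A||B|$. The number of cycles with that chord is then at most $\varepsilon$ times the number of "unconstrained" configurations, plus a term from sets smaller than $t$. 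The latter is controlled because the neighbourhood growth in a $D$-regular graph with $D^\ell\ge C^\ell t^{\ell-1}m$ forces the relevant sets to exceed $t$, except at a bounded number of levels. At those levels we lose only factors of $t$, which are absorbed by $C$. Summing over the $O(\ell^2)$ chord positions, the chorded cycles number at most $(O_\ell(\varepsilon)+O_\ell(C^{-1}))D^{2\ell}$. Choosing $\varepsilon$ small and $C$ large leaves $\Omega(D^{2\ell})\ge C'd^{2\ell}$ induced copies.

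The delicate part is the chord-counting in Step 3. A chord joining vertices that are close along the cycle requires a genuinely local argument, where the sets $A,B$ lie in neighbourhoods of a single vertex and may be smaller than $t$. We would handle it by first deleting edges in dense small structures, i.e.\ vertices with more than $\sqrt{\varepsilon}D$ edges inside a neighbourhood-sized set. Sparseness shows this removes only a small fraction of edges; we would then redo Steps 1--2 on the cleaned graph.
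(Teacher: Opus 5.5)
Your skeleton matches the paper's: almost-regular pieces, counting homomorphic $2\ell$-cycles, discarding non-injective ones, and a union bound over the $O(\ell^2)$ chord positions. But Step 3, which carries all the weight, has a genuine gap.

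For fixed positions $0,k$, the number of homomorphic cycles with a chord there is $\sum_{ab\in E(\Gamma)} w^{k}_{ab}w^{2\ell-k}_{ab}$. This is a \emph{weighted} count, whereas $(1-\varepsilon,t)$-sparseness only speaks about $e(A,B)$ for sets. It transfers to weights only in the form $u^{T}A_\Gamma v\le\varepsilon$, for nonnegative $u,v$ of total mass at most $1$ with every entry at most $1/t$. That needs both a product structure and an $\ell^\infty$ bound, and your sets \emph{reached at steps $i$ and $j$ from a fixed root} supply neither:
\begin{itemize}
\item With only the root fixed, the joint weight of $(v_i,v_j)=(a,b)$ still contains the factor $w^{j-i}_{ab}$, so it is not a product.
\item Knowing $|A|,|B|\ge t$ says nothing when the weight is concentrated on fewer than $t$ vertices.
\item The claim that small levels \emph{lose only factors of $t$, absorbed by $C$} cannot be right, since $C$ is independent of $t$.
\end{itemize}
The density hypothesis has to enter through a bound on how concentrated the walk distributions are, not on support sizes.

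What is missing is the paper's key estimate; write $m$ and $D$ for the order and average degree of the almost-regular piece $G$.
\begin{itemize}
\item Condition on the images $x,y$ of the two antipodal positions $1$ and $\ell+1$, which separate the chord endpoints. The two halves become independent uniform $\ell$-walks from $x$ to $y$, so the endpoints have product law $p^{1}_{xy}\otimes p^{k-1}_{xy}$.
\item Discard the heavy vertices, those with $p^{r}_{xy}(v)\ge 1/t$. What remains satisfies the bilinear bound and contributes at most $\varepsilon$.
\item The averaged heavy mass obeys $\rho_r\hom(C_{2\ell},G)\le t\sum_v w^{2r}_{vv}w^{2\ell-2r}_{vv}\le tKD\sum_i\lambda_i^{2\ell-2}$, using log-convexity of $k\mapsto w^{2k}_{vv}$ and almost-regularity.
\item H\"older together with $\hom(C_{2\ell},G)\ge D^{2\ell}$ then gives $\rho_r\le K(t^{\ell-1}m/D^{\ell})^{1/(\ell-1)}$. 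This is small exactly when $D\ge C_1t^{1-1/\ell}m^{1/\ell}$ with $C_1$ large.
\end{itemize}

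Your remedy of deleting \emph{dense small structures} does not address this. In this range every $G$-neighbourhood already has at least $t$ vertices, so it is already sparse in $\Gamma$. The real obstruction is non-uniformity of the walk distribution. Moreover, deleting edges cannot remove chords, which must be absent from $\Gamma$ itself.

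Two smaller issues:
\begin{itemize}
\item $\mathrm{tr}(A^{2\ell})\ge m\lambda_1^{2\ell}$ is false. The correct bound is $\lambda_1^{2\ell}\ge D^{2\ell}$, which is what you actually use later.
\item A single almost-regular subgraph with $D\ge c_0d(m/n)^{1/\ell}$ yields only about $d^{2\ell}(m/n)^2$ cycles. Deleting low-degree vertices does not produce almost-regularity, which is needed both for the non-injectivity bound (via $\Delta(G)$) and for the heavy-vertex bound. The paper instead takes the Jiang--Yepremyan collection of edge-disjoint $K$-almost-regular $G_i$ with $\sum_i d(G_i)^{2\ell}\ge 4^{-2\ell}d^{2\ell}$ and sums over it.
\end{itemize}
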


We remark that from our proof we recover that $\varepsilon$ in the statement may be chosen at most of order $\ell^{-2}$. 
Our proof also provides an alternative proof of the original induced Tur\'an problem for $(c,t)$-sparse graphs, which follows directly from the following lemma: 
\begin{lemma}[Lemma 3.1, \cite{ding2024induced}]
Let \(H\) be a bipartite graph. For any \(c,\varepsilon>0\), there exists
\(\beta_0=\beta(c,H,\varepsilon)
\) such that for any \(\beta\geq  \beta_0\) and every \((c,t)\)-sparse graph \(G\), if \(G\) does not contain an induced copy of \(H\), then it is also \((1-\varepsilon,\beta t)\)-sparse.
\end{lemma}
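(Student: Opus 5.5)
We argue by contraposition: suppose $G$ is $(c,t)$-sparse but not $(1-\varepsilon,\beta t)$-sparse, and produce an induced copy of $H$. Then there are $A,B\subseteq V(G)$ with $|A|,|B|\ge \beta t$ and $e(A,B)>\varepsilon|A||B|$. Let $H$ have bipartition $X\cup Y$ with $|X|=|Y|=h$; we may assume this, since $H$ is an induced subgraph of a balanced bipartite graph obtained by adding isolated vertices. The strategy is to locate $2h$ vertex sets $V_x$ ($x\in X$) and $V_y$ ($y\in Y$) with three properties:
\begin{itemize}
\item every pair $(V_u,V_w)$ with $u\ne w$ is $\eta$-regular;
\item every pair $(V_x,V_y)$ with $xy\in E(H)$ has density at least $\varepsilon/4$;
\item every set has size at least $t$.
\end{itemize}
The third property carries the weight of the argument. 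By $(c,t)$-sparseness applied to the pair $(V_u,V_w)$, every pair of these sets has edge density at most $1-c$, so its non-edge density is at least $c$. An induced counting lemma for regular pairs (the standard embedding lemma applied to the graph together with its complement) then yields an induced copy of $H$ with $u\in V_u$, as long as $\eta$ is small compared to $\min(c,\varepsilon/4)$ and $h$.

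First we reduce to a dense bipartite setting. If $A$ and $B$ overlap a lot, we split them: partition $A\cup B$ randomly into two halves $A',B'$ with $A'\subseteq A$ and $B'\subseteq B$ (each vertex of $A\cap B$ goes to one side at random). This gives disjoint sets of size at least $\beta t/2$ with $e(A',B')\ge \varepsilon|A'||B'|/4$, say. Next apply Szemerédi's regularity lemma to $G[A'\cup B']$ with parameter $\eta$, refining the partition $\{A',B'\}$. This gives at most $M=M(\eta)$ clusters, each of size at least $\beta t/(4M)$. Choosing $\beta_0\ge 4M$ makes every cluster have size at least $t$, which is exactly why $\beta$ must depend on $c,H,\varepsilon$ (through $\eta$ and $M$).

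In the reduced graph $R$, the clusters inside $A'$ and inside $B'$ form the two sides. We keep a cross pair as an edge if it is regular and has density at least $\varepsilon/8$. Since $e(A',B')$ is large, $R$ has bipartite density bounded below by roughly $\varepsilon/8$ once $\eta\ll\varepsilon$. Moreover, all but an $\eta$-fraction of all pairs of clusters are regular, including pairs on the same side.

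The step I expect to be the main obstacle is finding, in $R$, a copy of $K_{h,h}$ across the two sides whose $2h$ clusters are additionally pairwise regular, including the within-side pairs. I would do this by random sampling. Pick $k$ random clusters from each side, where $k$ is a large constant depending on $h$ and $\varepsilon$. With positive probability, two things hold at once:
\begin{itemize}
\item the induced bipartite subgraph of $R$ keeps density about $\varepsilon/8$, so by Kővári--Sós--Turán it contains $K_{h,h}$ once $k$ is large;
\item no irregular pair appears among the $2k$ sampled clusters, since the expected number of irregular pairs is at most $\eta\binom{2k}{2}<1$ once $\eta\ll k^{-2}$.
\end{itemize}
The constants must therefore be chosen in the order $\varepsilon,c,h\to k\to\eta\to M\to\beta_0$. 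With that $K_{h,h}$ in hand, we assign its clusters to $X$ and $Y$ and apply the induced counting lemma as described above, which completes the proof.
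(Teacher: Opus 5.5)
The paper does not prove this lemma. It is imported from Ding et al.\ and used only as a black box in the introduction, so there is no in-paper argument to compare with. Judged on its own, your outline is the standard regularity argument, and its core is sound. Two choices make the induced counting lemma applicable, including the non-adjacencies inside $X$ and inside $Y$: you demand regularity for same-side cluster pairs too, and you apply $(c,t)$-sparseness to clusters of size at least $t$, which gives non-edge density at least $c$ in every pair.

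One step fails as written, because you never control the ratio $|A|/|B|$. Suppose $|B|$ is much larger than $|A|/\eta$. Then $|A'|$ falls below the exceptional-set allowance $\eta|A'\cup B'|$, and an equitable partition of $A'\cup B'$ refining $\{A',B'\}$ may place all of $A'$ in the exceptional set. The roughly $\eta|A'\cup B'|^2$ edges you discard also swamp $e(A',B')\le|A'||B'|$, so the claimed cross density of $R$ is unjustified. Even a regularity lemma that does produce clusters on both sides does not help: an $\eta$-fraction of all cluster pairs can then contain every pair on the smaller side, so your irregular-pair count in the sampling step breaks too.

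The repair is a preliminary averaging step. If $|A|\le|B|$, a uniformly random $B_1\subseteq B$ with $|B_1|=|A|$ satisfies $\mathbb{E}\,e(A,B_1)=e(A,B)|A|/|B|>\varepsilon|A||B_1|$, so you may assume $|A|=|B|$. After the split, $|A'|$ and $|B'|$ are within a factor $4$ of each other, and the rest of your argument goes through.

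Two smaller points need tightening.
\begin{enumerate}
\item The random split does not deterministically give sets of size $\beta t/2$. Combine the reverse-Markov bound $\Pr[e(A',B')\ge\varepsilon|A||B|/8]\ge\varepsilon/8$ with Chernoff bounds giving $|A'|\ge|A|/4$ and $|B'|\ge|B|/4$. Their failure probability is $e^{-\Omega(\beta t)}$, which is negligible once $\beta_0$ is large.
\item In the sampling step, ``expected number of irregular pairs $<1$'' only shows that each of your two events is separately possible. You need the two failure probabilities to sum to less than $1$. You can get this from concentration of the sampled density for large $k$, or by taking $\eta k^2\ll\varepsilon$.
\end{enumerate}
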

  If there is no induced copy of $C_{2\ell}$, then this lemma applies to bootstrap from $(c, t)$-sparseness to $(1-\varepsilon, \beta t)$-sparseness and our result shows there will be at least one induced $C_{2\ell}$ if there are at least $C(\beta t)^{1-1/\ell}n^{1+1/\ell}$ edges.

\subsection*{Proof overview.} 
Our approach uses a known regularisation result from \cite{jiangYepremyan2020} (see Lemma~\ref{lemma:regularization_citation}), splitting the graph $\Gamma$ into multiple edge-disjoint (not necessarily induced) subgraphs $G_i$ with only a constant factor between minimum and maximum degree. We lower-bound the number of copies of $C_{2\ell}$ which occur in each subgraph $G_i \subseteq \Gamma$ that are induced in $\Gamma$. 
For this purpose, we use an algebraic approach of counting the number of homomorphic copies of $C_{2\ell}$ occurring in each $G_i$.
A result of Janzer \cite{JanzerOliver2023RTno} (see Lemma~\ref{lemma:many injective homs0}) allows us to bound the number of non-injective homomorphisms.
The setup assembling these previous results is explained in \Cref{section:technical}.

The homomorphism counting technique was already used in the $K_{s,s}$-free setting by Hunter, Milojevi\'c, Sudakov, and Tomon ~\cite{HunterZach2025Ktfh}, and subsequently by Dong, Gao, Li, and Liu ~\cite{DongGaoLiLiu2025} to obtain the supersaturation result. In the $(c,t)$-sparse setting, our main new ingredient is a quantitative spread-outness estimate for the distributions of vertices along the walks forming a homomorphic cycle. Under the stronger assumption $c=(1-\varepsilon)$ with sufficiently small $\varepsilon$, this estimate yields the conjectured polynomial dependence on $t$.

More precisely, to control a fixed prospective chord, we condition on the images $x$ and $y$ of two opposite positions of the cycle. The two halves of a uniformly random homomorphism then become independent uniformly chosen walks of length $\ell$ from $x$ to $y$. At each internal position, we call a vertex heavy if it occurs in at least a $1/t$ proportion of these walks. The key quantitative estimate (\cref{lemma:rho_r small}) bounds the average mass of the heavy vertices. 
After discarding all heavy vertices, no vertex can appear in a fixed position in more than $1/t$ of the $xy$-walks. This spread of the walks allows us to use the $(1-\varepsilon, t)$-sparseness of $\Gamma$, expressed through the bilinear form associated with its adjacency matrix (\cref{lemma:bilinear form bounded,lemma:small probability of chords}) to bound the probability that the two positions are adjacent by $\varepsilon$.
A union bound over all nonconsecutive pairs of positions shows that a positive proportion of the homomorphisms in each $G_i$ are induced cycles in $\Gamma$. Summing over the edge-disjoint subgraphs $G_i$ completes the proof.
\subsection*{Notation} Let $G$ be a graph. We write $v(G)$ and $e(G)$ for   the number of vertices and edges of $G$ respectively. By $\delta(G)$, $d(G)$, and $\Delta(G)$, we denote the minimum, average and maximum degree of $G$ respectively.
A graph is $K$-almost-regular if $\Delta (G) \leq K\delta (G)$.
A \emph{homomorphism} from a graph $H$ to a graph $G$ is a mapping $f: V(H) \to V(G)$ that preserves adjacency, which means that if $(u, v)$ is an edge in $H$, then $(f(u), f(v))$ is an edge in $G$.
We write $\operatorname{Hom}(H,G)$ for the collection of such homomorphisms, and $\hom(H, G) = \lvert\operatorname{Hom}(H,G)\rvert$ for the number of such homomorphisms from $H$ to $G$.
A homomorphism $f: H \to G$ is \emph{injective} if it is injective in the usual way.

\section{Controlling Chords}\label{section:inducedness}
In this section, we prove the estimates on heavy vertices and prospective chords described in the proof overview. We now introduce some notation relevant for this section.
Note that much of this notation leaves the graph $G$ implicit (and when dealing with two graphs $G \subseteq \Gamma$, the notation will always refer to $G$).
We also leave the length $\ell$ implicit, where we are counting induced copies of $C_{2\ell}$.
For an integer $j$, we denote by $w_{xv}^{j}$ the number of walks of length $j$ starting at $x$ and ending at $v$ in $G$.
By $$p_{xy}^r(v)=\frac{w^r_{xv}w^{\ell-r}_{vy}}{w^\ell_{xy}},$$ we denote the probability of the vertex $v$ being in the $r$-th position on a randomly chosen walk of length $\ell$ between $x$ and $y$ if $w^\ell_{xy}\neq0$, and $p_{xy}^r(v) = 0$ otherwise.
As already said, a vertex $v$ occurring at position $r$ on many $xy$-walks is \emph{heavy at position $r$} if $p^r_{xy}(v) \ge 1/t$, and we denote by $$\alpha_{xy}^r=\sum_{p_{xy}^r(v)\ge\frac{1}{t}}p_{xy}^r(v)$$ the probability mass of the heavy vertices.
Lastly, we denote by $$\rho_r=\sum _{(x,y)}\frac{(w_{xy}^{\ell})^2}{\hom(C_{2\ell}, G)}\alpha_{xy}^r,$$
the averaged probability mass over all pairs $x,y$.

We begin the proof by recalling two standard estimates for the number of homomorphisms of an even cycle.

\begin{lemma}[e.g.\@ Theorem 1.1 and Corollary 1.3 \cite{StanleyRichardP2013ACWT}]\label{lemma:H_2l eigenvalues and paths}
Let $G$ be a graph on $n$ vertices with adjacency eigenvalues $\lambda_1,\dots,\lambda_n$. Then, for every positive integer $\ell\ge2$ we have
$$\hom(C_{2\ell},G)
 =\sum_{i=1}^n\lambda_i^{2\ell}
 =\sum_{x,y\in V(G)}(w_{xy}^{\ell})^2.$$
\end{lemma}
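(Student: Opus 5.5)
Both equalities come from expressing $\hom(C_{2\ell},G)$ through the adjacency matrix $A$ of $G$. First I will record that $(A^j)_{xy}=w^j_{xy}$ for every $j\ge0$, by induction on $j$: a walk of length $j+1$ from $x$ to $y$ is a walk of length $j$ from $x$ to some $u$ followed by an edge $uy$, so $(A^{j+1})_{xy}=\sum_u (A^j)_{xu}A_{uy}=\sum_u w^j_{xu}A_{uy}=w^{j+1}_{xy}$. Next, a homomorphism $f\colon C_{2\ell}\to G$ with the cycle vertices labelled $0,1,\dots,2\ell-1$ in cyclic order is the same thing as a closed walk $f(0),f(1),\dots,f(2\ell-1),f(0)$ of length $2\ell$ in $G$; this identification holds because vertex repetitions are allowed. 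Hence
\[
\hom(C_{2\ell},G)=\sum_{x}w^{2\ell}_{xx}=\operatorname{tr}(A^{2\ell}).
\]

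For the eigenvalue form, $A$ is real symmetric, so it is orthogonally diagonalisable as $A=Q\Lambda Q^{T}$ with $\Lambda=\operatorname{diag}(\lambda_1,\dots,\lambda_n)$. Then $A^{2\ell}=Q\Lambda^{2\ell}Q^{T}$, and by cyclicity of the trace, $\operatorname{tr}(A^{2\ell})=\sum_i\lambda_i^{2\ell}$.

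For the walk form, split each closed walk at its midpoint. We have $w^{2\ell}_{xx}=(A^{2\ell})_{xx}=\sum_y (A^\ell)_{xy}(A^\ell)_{yx}$, since a closed walk of length $2\ell$ from $x$ is determined by its vertex $y$ at position $\ell$ together with the two halves. Symmetry of $A^\ell$ gives $(A^\ell)_{yx}=(A^\ell)_{xy}=w^\ell_{xy}$, so summing over $x$ yields $\sum_{x,y}(w^\ell_{xy})^2$.

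No step is a genuine obstacle. The only point needing care is the identification of homomorphisms with rooted, oriented closed walks, which gives an exact equality with no symmetry factor. The assumption $\ell\ge2$ is not actually needed; it only ensures that $C_{2\ell}$ is a genuine cycle.
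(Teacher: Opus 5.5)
Your proof is correct and follows the standard argument the paper relies on. The paper gives no proof of its own and cites Stanley's Theorem~1.1 ($(A^j)_{xy}=w^j_{xy}$) and Corollary~1.3 (closed walks of length $k$ are counted by $\operatorname{tr}(A^{k})=\sum_i\lambda_i^{k}$), which are exactly the two facts you establish; the second equality then follows from $(A^{2\ell})_{xx}=\sum_y (A^\ell)_{xy}(A^\ell)_{yx}$ together with symmetry of $A^\ell$, as you do.
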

\begin{lemma}[\cite{SIDORENKOA.F.1992Iffg}] \label{lemma:many homs}
    If $\ell\ge2$ is an integer and $G$ is a graph on $n$ vertices and has average degree $d$, then  $\hom(C_{2\ell}, G) \ge d^{2\ell}$.
\end{lemma}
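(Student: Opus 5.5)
The statement to prove is Sidorenko's inequality for even cycles: $\hom(C_{2\ell},G)\ge d^{2\ell}$. The plan is to go through the spectral identity of \cref{lemma:H_2l eigenvalues and paths}. Since every term $\lambda_i^{2\ell}$ is nonnegative, we can drop all but the top one:
$$\hom(C_{2\ell},G)=\sum_{i=1}^n \lambda_i^{2\ell}\ge \lambda_1^{2\ell}.$$
So it suffices to show that the largest adjacency eigenvalue satisfies $\lambda_1\ge d$.

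For this, we use the Rayleigh quotient characterisation of $\lambda_1$ for the symmetric adjacency matrix $A$ of $G$. Testing against the all-ones vector $\mathbf 1$ gives
$$\lambda_1\ge \frac{\mathbf 1^{\top}A\mathbf 1}{\mathbf 1^{\top}\mathbf 1}=\frac{2e(G)}{n}=d.$$
Because $d\ge 0$ and $2\ell$ is even, raising to the $2\ell$-th power preserves the inequality, and we obtain $\lambda_1^{2\ell}\ge d^{2\ell}$.

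As a cross-check, there is a purely combinatorial route via the walk form of \cref{lemma:H_2l eigenvalues and paths}. By Cauchy--Schwarz,
$$\sum_{x,y}(w^\ell_{xy})^2\ge \frac{1}{n^2}\Bigl(\sum_{x,y}w^\ell_{xy}\Bigr)^2=\frac{W_\ell^2}{n^2},$$
where $W_\ell$ is the total number of walks of length $\ell$. The bound $W_\ell\ge n d^\ell$ is the Blakley--Roy inequality, which itself follows from $\mathbf 1^{\top}A^\ell\mathbf 1\ge n\lambda_1^{\ell}$-type spectral estimates or from the power-mean inequality. Plugging this in gives $\hom(C_{2\ell},G)\ge n^2d^{2\ell}/n^2=d^{2\ell}$.

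There is no real obstacle in the spectral route. The only point needing care is that $\lambda_1$, rather than $|\lambda_n|$, is the eigenvalue bounded below by $d$, and that dropping the other terms is legitimate because $2\ell$ is even.
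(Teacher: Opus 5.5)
Your proof is correct, and it takes a different route from the paper only in that the paper gives no proof: it cites Sidorenko's inequality as a black box. That inequality says $\hom(H,G)\ge n^{v(H)}(d/n)^{e(H)}$ for a wider class of bipartite graphs $H$, and it specialises to $d^{2\ell}$ when $H=C_{2\ell}$.

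Your spectral argument is complete. Dropping all terms of $\sum_i\lambda_i^{2\ell}$ except the top one is legitimate because $2\ell$ is even. The Rayleigh quotient at $\mathbf{1}$ gives $\lambda_1\ge 2e(G)/n=d\ge 0$, so raising to the $2\ell$-th power preserves the inequality.

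What your route buys is self-containment: it uses only \cref{lemma:H_2l eigenvalues and paths}, which the paper states anyway. It also yields the slightly stronger bound $\hom(C_{2\ell},G)\ge\lambda_1^{2\ell}$. The citation buys generality to other bipartite $H$, which the paper never needs.

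Your cross-check should be dropped or repaired, because its parenthetical justification of Blakley--Roy is wrong. The inequality $\mathbf{1}^{\top}A^{\ell}\mathbf{1}\ge n\lambda_1^{\ell}$ fails in general. Take $K_s$ together with $n-s$ isolated vertices, where $n>s\ge2$: the left side is $s(s-1)^{\ell}$ and the right side is $n(s-1)^{\ell}$.

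The power-mean inequality alone only handles $\ell=2$, where $W_2=\sum_v\deg(v)^2$.

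For even $\ell$, a correct spectral proof of $W_\ell\ge nd^{\ell}$ works as follows. Take an orthonormal eigenbasis $(u_i)$ and set $c_i=\langle\mathbf{1},u_i\rangle$, so that $\sum_i c_i^2=n$. Then apply Jensen's inequality to $\mathbf{1}^{\top}A^{\ell}\mathbf{1}=\sum_i c_i^2\lambda_i^{\ell}$ with weights $c_i^2/n$. For odd $\ell$, the Blakley--Roy (Mulholland--Smith) inequality is a genuinely nontrivial theorem and has to be cited.

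Since your main argument never uses the cross-check, this flaw does not affect the correctness of your proof.
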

We now record a simple log-convexity property of the numbers of closed walks, which will be used to control the contribution of heavy vertices.
\begin{lemma}\label{lemma:bounding closed walks}
    Let $G$ be a graph.
    For all integers $\ell\ge2$ and $b\leq a\leq \ell/2$, and any vertex $v$, $$w_{vv}^{2a}w_{vv}^{2\ell-2a}\leq w_{vv}^{2b}w_{vv}^{2\ell-2b}.$$
\end{lemma}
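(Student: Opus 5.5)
The plan is to use the spectral decomposition of the closed-walk counts at a single vertex and then reduce the claim to log-convexity of a sequence of even power sums. Let $A$ be the adjacency matrix of $G$, with orthonormal eigenvectors $u_1,\dots,u_n$ and eigenvalues $\lambda_1,\dots,\lambda_n$. Then for every $j\ge0$,
$$w_{vv}^{j}=(A^j)_{vv}=\sum_{i=1}^n \lambda_i^{j}\,(u_i)_v^2 .$$
Setting $c_i=(u_i)_v^2\ge0$ and $\mu_i=\lambda_i^2\ge0$, we get $f(k):=w_{vv}^{2k}=\sum_i c_i\mu_i^{k}$, a nonnegative combination of exponentials in $k$. (One can avoid spectra entirely by writing $w_{vv}^{2k}=\|A^k e_v\|^2$, but the spectral form makes the convexity transparent.)

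The key step is to show that $k\mapsto f(k)$ is log-convex on the nonnegative integers, i.e. $f(k)^2\le f(k-1)f(k+1)$. This is Cauchy--Schwarz:
$$\Bigl(\sum_i c_i\mu_i^{k}\Bigr)^2=\Bigl(\sum_i \sqrt{c_i\mu_i^{k-1}}\sqrt{c_i\mu_i^{k+1}}\Bigr)^2\le \sum_i c_i\mu_i^{k-1}\sum_i c_i\mu_i^{k+1}.$$
Equivalently, $\langle A^ke_v,A^ke_v\rangle=\langle A^{k-1}e_v,A^{k+1}e_v\rangle\le\|A^{k-1}e_v\|\,\|A^{k+1}e_v\|$. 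It also covers $k=1$, since $f(0)=1$.

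Next, pass from local log-convexity to the claimed inequality. Write $g(k)=\log f(k)$, with the convention $\log 0=-\infty$. The one degenerate case is when $v$ has no neighbours: then $f(k)=0$ for all $k\ge1$, and both sides vanish unless $b=0$. In that case the right side is $f(0)f(\ell)=0$, and the left side is also $0$ because $\ell\ge2$ gives $a\ge1$ or $\ell-a\ge1$. Otherwise every $f(k)>0$, and $g$ is a convex sequence.

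For a convex sequence, the function $h(a)=g(a)+g(\ell-a)$ is symmetric about $\ell/2$. It is also non-increasing on $\{0,\dots,\lfloor \ell/2\rfloor\}$, because
$$h(a+1)-h(a)=\bigl[g(a+1)-g(a)\bigr]-\bigl[g(\ell-a)-g(\ell-a-1)\bigr]\le0,$$
using $a\le \ell-a-1$ (valid when $a+1\le\ell/2$) and the monotonicity of the increments of a convex sequence. Applying this repeatedly from $b$ up to $a$ gives $h(a)\le h(b)$, that is, $w_{vv}^{2a}w_{vv}^{2\ell-2a}\le w_{vv}^{2b}w_{vv}^{2\ell-2b}$.

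The only mild obstacle is the bookkeeping. We need the index range to be handled correctly: it suffices that $b\le a\le\ell/2$ with $b\ge0$, which is implicit in the statement. We also need the zero-walk edge case above, since there logarithms are unavailable. The analytic content is entirely the single Cauchy--Schwarz step.
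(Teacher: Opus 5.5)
Your proof is correct and is essentially the paper's argument: the paper uses exactly the Cauchy--Schwarz step $w_{vv}^{2k}=\langle A^{k+1}e_v,A^{k-1}e_v\rangle\le\|A^{k+1}e_v\|\,\|A^{k-1}e_v\|$ to get log-convexity of $k\mapsto w_{vv}^{2k}$, and your spectral version is the same inequality written in coordinates. The only difference is that you spell out what the paper compresses into ``the result follows'': the monotonicity of $g(a)+g(\ell-a)$ for $0\le a\le \ell/2$, and the degenerate case of an isolated vertex.
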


\begin{proof}
    Let $A$ be the adjacency matrix of $G$. We know that the number of closed walks of length $k$ from $v$ to $v$ is $(A^{k})_{vv}$.
    So,
    \begin{align*}
    w_{vv}^{2k}=(A^{2k})_{vv}=e_{v}^TA^{2k}e_v=\langle A^{k+1}e_v,A^{k-1}e_v\rangle
    &\leq \lVert A^{k+1}e_v\rVert \lVert A^{k-1}e_v\rVert
    \\
    &=\sqrt{(A^{2k+2})_{vv} (A^{2k-2})_{vv}}=\sqrt{w_{vv}^{2k+2}w_{vv}^{2k-2}}.
    \end{align*}
    That is, $(w_{vv}^{2k})^2\leq w_{vv}^{2k+2}w_{vv}^{2k-2}.$ Therefore, $w_{vv}^k$ is log convex in terms of $k$ as wanted and so the result follows.
\end{proof}
With this, we can bound the proportion of heavy vertices at a given position on a path.

\begin{lemma}\label{lemma:rho_r small}
    Let $\ell \ge 2$ be an integer.
    Let $G$ be a $K$-almost regular graph on $n$ vertices with average degree $d>0$. Then
    $\rho_r\le K\left(\frac{t^{\ell-1}n}{d^{\ell}}\right)^{1/(\ell-1)}$, for every $1\leq r\leq \ell-1$.
\end{lemma}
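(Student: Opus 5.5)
The idea is to expand $\rho_r$ by definition, use heaviness to trade the mass $\alpha^r_{xy}$ for a second moment of $p^r_{xy}$, and reduce everything to closed walks at single vertices. Those are then handled by \cref{lemma:bounding closed walks}, almost-regularity, Hölder, and Sidorenko's bound (\cref{lemma:many homs}).

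\emph{Step 1 (second moment).} For every heavy $v$ we have $p^r_{xy}(v)\ge 1/t$, so $p^r_{xy}(v)\le t\,p^r_{xy}(v)^2$. Hence $\alpha^r_{xy}\le t\sum_v p^r_{xy}(v)^2$. Substituting the definition of $p^r_{xy}$ cancels the weight $(w^\ell_{xy})^2$ in $\rho_r$; pairs with $w^\ell_{xy}=0$ contribute nothing. This gives
\[
\rho_r\hom(C_{2\ell},G)\le t\sum_v\sum_{x,y}(w^r_{xv})^2(w^{\ell-r}_{vy})^2=t\sum_v w^{2r}_{vv}\,w^{2\ell-2r}_{vv},
\]
using $\sum_x (w^r_{xv})^2=w^{2r}_{vv}$, i.e.\ symmetry of the adjacency matrix.

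\emph{Step 2 (log-convexity).} Since $1\le \min(r,\ell-r)\le \ell/2$, \cref{lemma:bounding closed walks} with $b=1$ (applied after swapping $r\leftrightarrow \ell-r$ if needed) gives
\[
w^{2r}_{vv}w^{2\ell-2r}_{vv}\le w^2_{vv}\,w^{2\ell-2}_{vv}=d_v\,w^{2\ell-2}_{vv}.
\]
Log-convexity of $k\mapsto w^{2k}_{vv}$ (shown in that proof) also interpolates between $k=1$ and $k=\ell$:
\[
w^{2\ell-2}_{vv}\le (w^{2}_{vv})^{1/(\ell-1)}(w^{2\ell}_{vv})^{(\ell-2)/(\ell-1)}.
\]
Write $W_v=w^{2\ell}_{vv}$, so that $\sum_v W_v=\hom(C_{2\ell},G)$ by \cref{lemma:H_2l eigenvalues and paths}. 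Combining the two displays,
\[
\rho_r\hom(C_{2\ell},G)\le t\sum_v d_v^{\ell/(\ell-1)}W_v^{(\ell-2)/(\ell-1)}.
\]

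\emph{Step 3 (Hölder and almost-regularity).} Apply Hölder with exponents $\ell-1$ and $(\ell-1)/(\ell-2)$; for $\ell=2$ this step is trivial. This yields
\[
\sum_v d_v^{\ell/(\ell-1)}W_v^{(\ell-2)/(\ell-1)}\le\Bigl(\sum_v d_v^{\ell}\Bigr)^{1/(\ell-1)}\hom(C_{2\ell},G)^{(\ell-2)/(\ell-1)}.
\]
Since $\Delta\le K\delta\le Kd$, we have $\sum_v d_v^\ell\le \Delta^{\ell-1}\sum_v d_v\le K^{\ell-1}d^{\ell}n$. Dividing through and applying \cref{lemma:many homs}, $\hom(C_{2\ell},G)^{-1/(\ell-1)}\le d^{-2\ell/(\ell-1)}$, we get
\[
\rho_r\le tK\,(d^\ell n)^{1/(\ell-1)}d^{-2\ell/(\ell-1)}=K\Bigl(\frac{t^{\ell-1}n}{d^\ell}\Bigr)^{1/(\ell-1)}.
\]

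The main obstacle is Step 2/3: getting exactly the exponent $1/(\ell-1)$ together with only a single factor of $K$. The crude route, bounding $\sum_v w^{2\ell-2}_{vv}$ by spectral Hölder and pulling out a full $\Delta$, loses a power of $d^\ell/n$. Pulling out only $\Delta^{\ell-1}$ inside the $(\ell-1)$-th root loses $K^{\ell/(\ell-1)}$. The interpolation in Step 2 combined with bounding $\sum_v d_v^\ell$ via $\Delta^{\ell-1}$ avoids both losses.
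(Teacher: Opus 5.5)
Your proof is correct. Step 1 and the first half of Step 2 coincide with the paper: both use heaviness to pass to a second moment, then $\sum_x (w^r_{xv})^2=w^{2r}_{vv}$, then log-convexity with $b=1$, reducing to $t\sum_v d_v\,w^{2\ell-2}_{vv}$.

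The routes differ only at the end. The paper immediately bounds $d_v\le\Delta\le Kd$ and rewrites $\sum_v w^{2\ell-2}_{vv}=\sum_i\lambda_i^{2\ell-2}$. It then applies H\"older to the eigenvalues, interpolating between the exponents $2$ and $2\ell$ and using $\sum_i\lambda_i^2=nd$. You instead interpolate vertex by vertex via log-convexity between $k=1$ and $k=\ell$, apply H\"older over vertices, and only at the end bound $\sum_v d_v^\ell\le\Delta^{\ell-1}nd$.

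Your version uses no eigenvalues beyond $\operatorname{tr}A^{2\ell}=\hom(C_{2\ell},G)$. It also gives the intermediate bound $\rho_r\le t\bigl(\sum_v d_v^\ell/\hom(C_{2\ell},G)\bigr)^{1/(\ell-1)}$, which depends on the $\ell$-th degree moment rather than on $\Delta$. The paper's version is shorter. After your final estimate, both arguments give literally the same inequality, $\rho_r\hom(C_{2\ell},G)\le tKd\,(nd)^{1/(\ell-1)}\hom(C_{2\ell},G)^{(\ell-2)/(\ell-1)}$.

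Your closing paragraph misdiagnoses the obstacle. Pulling out a full $\Delta\le Kd$ and then using spectral H\"older loses nothing, provided the interpolation is between $\lambda^2$ and $\lambda^{2\ell}$, so that $\sum_i\lambda_i^2=nd$ enters. That is exactly the paper's route, and it yields $K\bigl(t^{\ell-1}n/d^\ell\bigr)^{1/(\ell-1)}$ with a single factor of $K$. The loss of a power of $d^\ell/n$ you describe only occurs if one interpolates against $\lambda^0$, that is, against $n$ instead of $nd$. Your vertex-wise interpolation is therefore a valid alternative, not a necessary repair.
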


\begin{proof}
By the definitions of $\rho_r$ and $\alpha_{xy}^r$, and \cref{lemma:H_2l eigenvalues and paths}, we have the following:
\begin{equation}\label{eq:rho_r times hom}
    \rho_r\hom(C_{2\ell}, G)=\sum_{(x,y)}(w_{xy}^\ell)^2\alpha_{xy}^r\leq t\sum_{(x,y)}\sum_{v}(p_{xy}^r(v))^2(w_{xy}^\ell)^2=t\sum_{(x,y)}\sum_v(w_{xv}^r)^2(w_{vy}^{\ell-r})^2,
\end{equation}
    where the inequality holds because $$\sum_v \left(p_{xy}^r(v)\right)^2\geq\sum_{p_{xy}^r(v)\geq \frac{1}{t}}\left(p_{xy}^r(v)\right)^2\ge\frac{1}{t}\sum_{p_{xy}^r(v)\geq \frac{1}{t}}p_{xy}^r(v)=\frac{\alpha_{xy}^r}{t}.$$
Now we observe that 
\begin{equation}\label{eq:double sum}
\sum_{(x,y)}\sum_v(w_{xv}^r)^2(w_{vy}^{\ell-r})^2=\sum_v\left(\sum_x(w_{xv}^r)^2\sum_y\left(w_{vy}^{\ell-r}\right)^2\right)=\sum_vw^{2r}_{vv}w_{vv}^{2(\ell-r)}.
\end{equation}
By \cref{lemma:bounding closed walks}, applied with $a = \min(r,\ell-r)$, $b = 1$, and $K$-almost regularity, we upper bound the right-hand side of \cref{eq:double sum}
\begin{equation}\label{eq:path product sum}
\sum_vw^{2r}_{vv}w_{vv}^{2\ell-2r}\leq\sum_vw_{vv}^2w_{vv}^{2\ell-2}=\sum_v\deg(v)w_{vv}^{2\ell-2}\leq Kd\sum_vw_{vv}^{2\ell-2}=Kd\sum_{i=1}^n\lambda_i^{2\ell-2},
\end{equation}
where the last equality is by \cref{lemma:H_2l eigenvalues and paths}.
From H\"older's inequality, we get
\begin{equation}\label{eq:Holder}
\left(\sum_{i=1}^n\lambda_i^{2\ell-2}\right)\leq\left(\sum_{i=1}^n\lambda_i^{2\ell}\right)^{\frac{\ell-2}{\ell-1}}\left(\sum_{i=1}^n\lambda_i^{2}\right)^{\frac{1}{\ell-1}}.
\end{equation}
Using the estimate $\sum_{i=1}^n\lambda_i^{2\ell} = \hom(C_{2\ell}, G) \ge d^{2\ell}$ (\Cref{lemma:many homs,lemma:H_2l eigenvalues and paths}), the observation $\sum_{i=1}^n \lambda_i^2 = nd$ alongside with \cref{eq:rho_r times hom,eq:double sum,eq:path product sum,eq:Holder}, we get 
$$\rho_r\leq\frac{t}{\hom(C_{2\ell}, G)}\sum_v w_{vv}^{2r}w_{vv}^{2\ell-2r}\leq Kdt\left(\frac{\sum_{i=1}^n\lambda_i^2}{\sum_{i=1}^n\lambda_i^{2\ell}}\right)^{\frac{1}{(\ell-1)}}\leq K\left(\frac{t^{\ell-1}n}{d^{\ell}}\right)^{\frac{1}{(\ell-1)}}.$$
\end{proof}

As mentioned before, our plan will be to exclude heavy vertices and control the remaining ones using the sparseness.
This will be done by the following lemma bounding the bilinear form $u^TA_{\Gamma}v$ on entrywise bounded subprobability vectors.

\begin{lemma}\label{lemma:bilinear form bounded}
    Suppose $u$ and $v$ are $n$-dimensional vectors with non-negative entries all at most $1/t$, and $1^Tu \le 1$, $1^Tv \le 1$, and $\Gamma$ is a $(1-\varepsilon,t)$-sparse graph on $n$ vertices with adjacency matrix $A_\Gamma$.
    Then $$u^TA_\Gamma v \le \varepsilon.$$
\end{lemma}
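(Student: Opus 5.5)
The plan is to view $u^TA_\Gamma v$ as a bilinear function of $(u,v)$ on the polytope
\[
P=\{w\in\mathbb{R}^n: 0\le w_i\le 1/t,\ \textstyle\sum_i w_i\le 1\},
\]
and to reduce to extreme points. For fixed $v$, the map $u\mapsto u^TA_\Gamma v$ is linear, so its maximum over $P$ is attained at a vertex of $P$; the same holds for $v$ with $u$ fixed. Hence the maximum of $u^TA_\Gamma v$ over $P\times P$ is attained at a pair of vertices of $P$.

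Next I would identify the vertices of $P$. A vertex is cut out by $n$ linearly independent tight constraints among $w_i=0$, $w_i=1/t$ and $\sum w_i=1$. Consequently at most one coordinate can be strictly between $0$ and $1/t$, and all other coordinates lie in $\{0,1/t\}$. So a vertex has the form $w=\frac1t 1_S+\theta e_j$ with $|S|\le t$ and $0\le\theta<1/t$, where $\theta\neq0$ forces $\sum w=1$. A cleaner way to avoid fractional pieces is to write each vertex as a convex combination of the vectors $\frac1t 1_T$ with $|T|=t$ (assuming $t\le n$ is an integer; otherwise round and use $|T|=\lceil t\rceil$). Indeed, mass at most $1$ spread with entries at most $1/t$ is dominated by such a combination: pad $w$ upward to a vector $w'\ge w$ with $\sum w'=1$ and entries in $[0,1/t]$. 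This is possible since $n/t\ge1$. The polytope $\{w':\sum w'=1,\ 0\le w'_i\le 1/t\}$ is the scaled hypersimplex, whose vertices are exactly $\frac1t 1_T$ with $|T|=t$.

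Because $A_\Gamma$ has nonnegative entries, monotonicity gives $u^TA_\Gamma v\le u'^TA_\Gamma v'$. By bilinearity and convexity it then suffices to bound
\[
\frac1{t^2}\,1_S^TA_\Gamma 1_T=\frac{e(S,T)}{t^2}\quad\text{for }|S|=|T|=t.
\]
Here $(1-\varepsilon,t)$-sparseness gives $e(S,T)\le \varepsilon |S||T|=\varepsilon t^2$, with edges in $S\cap T$ counted twice, matching $1_S^TA1_T$. This yields the bound $\varepsilon$.

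The main technical point is the integrality of $t$: the hypersimplex vertex description requires $1/t$-capped mass summing to exactly $1$, and for non-integer $t$ the vertices have one fractional coordinate. If $t$ is not an integer, I would handle such a vertex $\frac1t 1_S+\theta e_j$ with $|S|=\lfloor t\rfloor$ directly. I would instead use the sets $S\cup\{j\}$ of size $\lceil t\rceil\ge t$, writing the vector as a sub-multiple of $\frac{1}{t}1_{S\cup\{j\}}$. Applying sparseness to sets of size $\lceil t\rceil$ gives $e\le\varepsilon\lceil t\rceil^2$, which gives a bound $\varepsilon(\lceil t\rceil/t)^2$ and loses a constant. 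So in the write-up I would either assume $t$ is an integer (as is harmless in the application, replacing $t$ by $\lceil t\rceil$ everywhere) or note that only a constant-factor loss occurs.
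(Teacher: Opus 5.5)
Your proof is correct and is essentially the paper's argument. The paper fixes $u$, writes $u^TA_\Gamma v=\sum_i z_iv_i$ with $z=u^TA_\Gamma\ge 0$, and observes that this is maximised by putting mass $1/t$ on the $t$ largest coordinates of $z$. It then swaps the roles of $u$ and $v$, which is the same reduction to scaled indicators of $t$-sets that you get via padding and the hypersimplex vertices.

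Your caveat about integrality of $t$ and $t\le n$ is well founded: the paper's proof tacitly assumes $t$ is an integer, and the stated bound can fail otherwise. For example, a single edge $ac$ with $t\in(1,\sqrt2)$ gives a $(1-\tfrac12,t)$-sparse graph, yet $u=\tfrac1t e_a$, $v=\tfrac1t e_c$ give $1/t^2>\tfrac12$. So replacing $t$ by $\lceil t\rceil$ as you suggest is the right fix.
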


\begin{proof}
    Notice that the assumptions on $v$ translate to $0 \leq v_i \leq 1/t$ for all $i$ and $\sum_{i=1}^n v_i \leq 1$, where the $v_i$ are the components of $v$.
    Set $z = (z_1,z_2, \dots, z_n) := u^TA_{\Gamma}$ and note that $z \geq 0$ entrywise.
 With this, we obtain
    \begin{equation*}
        u^TA_{\Gamma}v = \sum_{i=1}^n z_iv_i.
    \end{equation*}
    Given the restrictions on the $v_i$ it is straightforward to see that the latter expression is maximized when $v$ is $1/t$ at the $t$ largest coordinates of $z$ and 0 elsewhere.
    Hence, it suffices to prove the statement for such $v$ which are indicators of size-$t$ sets of vertices scaled by $1/t$.
    Switching the roles of $u$ and $v$, we may also assume that $u$ is the indicator of a size-$t$ set of vertices scaled by $1/t$.
    Calling these sets $V$ and $W$, we get
    \begin{equation*}
        u^TA_{\Gamma}v = \frac{e(V,W)}{t^2} \leq \varepsilon,
    \end{equation*}
    using the $(1-\varepsilon,t)$-sparseness of $\Gamma$.
\end{proof}

Using this, and \cref{lemma:rho_r small}, we now bound the proportion of  homomorphisms having a particular chord in $\Gamma$.
We will then use the union bound (in \cref{lemma:holds in each part}) to bound the overall proportion of homomorphisms with any chord.

\begin{lemma}\label{lemma:small probability of chords}
    Let $\ell \ge 2$ be an integer.
  Let $\Gamma$ be a $(1-\varepsilon,t)$ sparse graph and $G\subseteq \Gamma$ be a $K$-almost regular subgraph with an average degree $d$ and $n$ vertices. For every fixed pair of nonconsecutive positions of $C_{2\ell}$, the proportion of  homomorphisms from $C_{2\ell}$ to $G$ which have an edge between them in $\Gamma$ is at most
  $$\varepsilon+2K\left(\frac{t^{\ell-1}n}{d^{\ell}}\right)^{\frac{1}{(\ell-1)}}.$$
\end{lemma}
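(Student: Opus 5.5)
Label the cycle positions $0,1,\dots,2\ell-1$ and fix a nonconsecutive pair $\{i,j\}$. By rotational and reflective symmetry of $C_{2\ell}$, uniformly random homomorphisms are invariant under relabelling, so we may rotate so that the pair has the form $\{r,s\}$ with $0<r<\ell$ and $\ell< s<2\ell$. This is possible because we can choose two opposite positions $0$ and $\ell$ that separate $i$ from $j$: since $i,j$ are nonconsecutive, some rotation puts them strictly inside different halves. (If one of them can only be placed at an endpoint, we instead choose the opposite pair to avoid both. With $2\ell$ positions and $i,j$ at cyclic distance $\ge 2$, some "diameter" $\{m,m+\ell\}$ strictly separates them.)

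We condition on the images $x=f(0)$ and $y=f(\ell)$. A uniform homomorphism chooses $(x,y)$ with probability $(w^\ell_{xy})^2/\hom(C_{2\ell},G)$ (\cref{lemma:H_2l eigenvalues and paths}). Given $(x,y)$, the two halves are independent uniform $xy$-walks of length $\ell$. Hence $f(r)$ has distribution $p^r_{xy}$, and $f(s)$ has distribution $p^{s'}_{xy}$ with $s'=2\ell-s\in[1,\ell-1]$, and the two are independent. The conditional probability of a $\Gamma$-edge between them is therefore $(p^r_{xy})^TA_\Gamma\,p^{s'}_{xy}$.

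Split each distribution as $p^r_{xy}=u+u'$, where $u$ is the restriction to non-heavy vertices ($p<1/t$) and $u'$ is the heavy part with total mass $\alpha^r_{xy}$. Split $p^{s'}_{xy}=v+v'$ similarly. Since $A_\Gamma$ has $0/1$ entries and all vectors are nonnegative subprobability vectors,
\[
(u+u')^TA_\Gamma(v+v')\le u^TA_\Gamma v+1^Tu'+1^Tv'\le \varepsilon+\alpha^r_{xy}+\alpha^{s'}_{xy},
\]
where the bound $u^TA_\Gamma v\le\varepsilon$ is \cref{lemma:bilinear form bounded}: entries are at most $1/t$ and the sums are at most $1$. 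The case $w^\ell_{xy}=0$ has zero weight, so it does not matter.

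Averaging over $(x,y)$ with the weights above gives a total at most $\varepsilon+\rho_r+\rho_{s'}$. By \cref{lemma:rho_r small} each $\rho$ term is at most $K(t^{\ell-1}n/d^\ell)^{1/(\ell-1)}$, which gives the claimed bound.

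The main point to check carefully is the symmetry reduction: that a diameter strictly separating two nonconsecutive positions always exists. Take the positions to be $i<j$ with $2\le j-i\le 2\ell-2$, and choose $m=i+1$. Then $j$ lies in $(i+1,i+1+\ell)$ or in $(i+1+\ell,\,i+1+2\ell)$, the latter being cyclically on the other side of $i$. A short case check on $j-i$ versus $\ell+1$ handles this. If $j=i+1+\ell$ exactly, we shift instead to $m=i-1$, which works since $j-i\ne 2\ell$ modulo the cycle.
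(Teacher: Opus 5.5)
Your argument is correct and is essentially the paper's proof. You condition on the images of a diameter that separates the two positions, and split each half's conditional distribution into light and heavy parts. You bound the light--light term by $\varepsilon$ via \cref{lemma:bilinear form bounded} and the rest by $\alpha^r_{xy}+\alpha^{s'}_{xy}$, then average to $\varepsilon+\rho_r+\rho_{s'}$ and apply \cref{lemma:rho_r small}. The paper does the same after placing the pair at $\{0,k\}$ with $2\le k\le\ell$ and cutting at positions $1$ and $\ell+1$, which gives $\rho_1+\rho_{k-1}$.

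The one slip is in your last paragraph. The diameter $\{i+1,i+1+\ell\}$ separates $i$ from $j$ only when $2\le j-i\le\ell$. For $j-i\ge\ell+1$, $j$ lies either on the diameter or on the same side as $i$, so your claim that the latter arc is on the other side of $i$ is false. To fix this, either reflect first to get $j-i\le\ell$, as the paper does, or use $m=i-1$ for all $\ell\le j-i\le 2\ell-2$.
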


\begin{proof}
Choose a  $\phi\in \operatorname{Hom}(C_{2\ell},G)$ uniformly at random.
By cyclically relabelling the cycle, suppose that the two positions are $0$ and $k$, where $2\leq k\leq \ell$.
Cut the cycle at positions $1$ and $\ell+1$.
We condition on the event $\phi(1)=x$ and $\phi(\ell+1)=y$.
The two halves of the cycle are then independent uniformly chosen length-$\ell$ walks from $x$ to $y$.
In the two halves, the prospective chord endpoints occur at internal positions $1$ and $(k-1)$.
For an internal position $r$, the corresponding conditional distribution over vertices appearing at this position is $p_{xy}^r(v)$. We discard heavy vertices at position $r$.
Then the total mass of the discarded vertices is $\alpha_{xy}^r$.

We define a subprobability vector for an internal position $r$ and vertices in $\Gamma$, denoted $\mathbf{u}_r$,
where $$(\mathbf{u}_r)_v = \begin{cases}
    0 & \text{if $v$ was discarded or $v\notin V(G)$}
    \\ p_{xy}^r(v) & \text{otherwise.}
 \end{cases}$$
Note that in $\mathbf{u}_r$, each entry is less than $1/t$, and the total mass is at most one.
 Then, the probability of the $0$-th and $k$-th vertices in the cycle being connected and not discarded is
$\mathbf{u}_1^TA_\Gamma\mathbf{u}_{k-1}$, which, by \cref{lemma:bilinear form bounded} applied to $\mathbf{u}_1$ and $\mathbf{u}_{k-1}$ is at most $\varepsilon$.
Thus, in total 
$$\mathbb{P}(\phi(0)\phi(k)\in E(\Gamma)| \phi(1)=x  ,\phi(\ell+1)=y)\leq\varepsilon+\alpha_{xy}^1+\alpha_{xy}^{k-1}.$$
Averaging over the  probability $\frac{\left(w_{xy}^\ell\right)^2}{\hom(C_{2\ell}, G)}$  of $(x,y)$ being chosen gives 
$$\mathbb{P}(\phi(0)\phi(k)\in E(\Gamma))\leq \varepsilon+\rho_1+\rho_{k-1}.$$
Applying \cref{lemma:rho_r small} gives the desired bound.
\end{proof}
\section{Technical Lemmas and Proof of the Main Theorem}
\label{section:technical}

In this section, we state two technical lemmas, which we will use, together with \cref{lemma:small probability of chords}, to prove \cref{theorem:main theorem}.
First, Janzer \cite{JanzerOliver2023RTno} provides a lemma which we use to prove a bound on non-injective homomorphisms:

\begin{lemma}[Lemma 2.2, \cite{JanzerOliver2023RTno}]
\label{lemma:many injective homs0}
Let $\ell\geq2$ be an integer and let $G=(V,E)$ be a graph on $n$ vertices. Let $\sim$ be a symmetric binary relation defined over $V$ such that for every $u\in V$ and $v\in V$, $v$ has at most $s$ neighbours $w\in V$ which satisfy $u\sim w$. Then the number of homomorphic $2\ell$-cycles $(x_1,x_2,\dots,x_{2\ell})$ in $G$ such that $x_i\sim x_j$ for some $i\neq j$ is at most
\[
32\ell^{\frac{3}{2}}s^{\frac{1}{2}}\Delta(G)^{\frac{1}{2}}n^{\frac{1}{2\ell}}
\hom(C_{2\ell},G)^{1-\frac{1}{2\ell}}.
\]
\end{lemma}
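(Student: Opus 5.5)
This is Lemma~2.2 of \cite{JanzerOliver2023RTno}, and the plan is to reproduce its proof by spectral counting, with Cauchy--Schwarz and H\"older. Write $A$ for the adjacency matrix of $G$, let $\lambda_1,\dots,\lambda_n$ be its eigenvalues, and let $H=\hom(C_{2\ell},G)=\sum_i\lambda_i^{2\ell}$ as in \cref{lemma:H_2l eigenvalues and paths}.

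First reduce to a single pair of positions. By cyclic symmetry and reversal, we may take the pair to be $x_1$ and $x_{k+1}$ with $1\le k\le \ell$, and the union bound over pairs costs only a factor polynomial in $\ell$. The number of homomorphic cycles with $x_1\sim x_{k+1}$ equals
\[
N_k=\sum_{u\sim w}(A^k)_{uw}(A^{2\ell-k})_{wu}.
\]
By Cauchy--Schwarz over the pairs $(u,w)$ with $u\sim w$,
\[
N_k\le \Bigl(\sum_{u\sim w}(A^k)_{uw}^2\Bigr)^{1/2}\Bigl(\sum_{u,w}(A^{2\ell-k})_{uw}^2\Bigr)^{1/2}.
\]
The second factor is $\operatorname{tr}(A^{4\ell-2k})^{1/2}=\bigl(\sum_i\lambda_i^{4\ell-2k}\bigr)^{1/2}$. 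Since $4\ell-2k\ge 2\ell$ and $\ell^p$-norms are monotone, this is at most $H^{(2\ell-k)/(2\ell)}$.

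The first factor is the only place where the relation $\sim$ enters, and it is where $s$ and $\Delta$ must appear. Expand one copy of $(A^k)_{uw}$ as $\sum_{v\in N(w)}(A^{k-1})_{uv}$. This gives
\[
\sum_{u\sim w}(A^k)_{uw}^2=\sum_{u,v}(A^{k-1})_{uv}\sum_{w\in N(v),\,u\sim w}(A^k)_{uw}.
\]
For fixed $u,v$, the inner sum has at most $s$ terms by hypothesis. I would then bound each term $(A^k)_{uw}$ by expanding it once more through the neighbours of $w$, picking up a factor $\Delta$, or alternatively apply Cauchy--Schwarz once more. The aim is a bound of the shape
\[
\sum_{u\sim w}(A^k)_{uw}^2\;\lesssim\; s\,\Delta\,\operatorname{tr}(A^{2k-2})\quad\text{(or a similar trace).}
\]

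The remaining lower trace is handled by H\"older on the eigenvalues: for $m\le \ell$,
\[
\sum_i\lambda_i^{2m}\le n^{1-m/\ell}H^{m/\ell}.
\]
Combining the two factors should give
\[
N_k\lesssim (s\Delta)^{1/2}\,n^{\frac{\ell-k+1}{2\ell}}\,H^{\frac{k-1}{2\ell}+\frac{2\ell-k}{2\ell}}=(s\Delta)^{1/2}\,n^{\frac{\ell-k+1}{2\ell}}\,H^{1-\frac{1}{2\ell}}.
\]

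The main obstacle is that this naive bound has the power $n^{(\ell-k+1)/(2\ell)}$, which equals the claimed $n^{1/(2\ell)}$ only when $k=\ell$. For short chords the split must therefore be rebalanced. Instead of two walks of lengths $k$ and $2\ell-k$, I would place the chord in the middle of a closed walk of length $2\ell$ and split the cycle at $x_{k+1}$ and at the antipodal position of $x_1$. Each factor then becomes a trace of a power near $2\ell$, so H\"older only ever loses $n^{1/(2\ell)}$. If this rebalancing does not close exactly, I would use Janzer's trick of iterating Cauchy--Schwarz over a dyadic sequence of walk lengths, which is where I expect the constant $32\ell^{3/2}$ to come from. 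Summing over the $O(\ell)$ values of $k$ and the $2\ell$ rotations gives the stated bound.
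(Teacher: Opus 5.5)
\textbf{There is a genuine gap: the proposal proves the required bound only for $k=\ell$.} The paper does not reprove this lemma; it quotes it from Janzer. So your proposal has to stand on its own.

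Everything up to $N_k\le (s\Delta)^{1/2}n^{(\ell-k+1)/(2\ell)}\hom(C_{2\ell},G)^{1-1/(2\ell)}$ is correct. The bound $\sum_{u\sim w}(A^k)_{uw}^2\le s\Delta\operatorname{tr}(A^{2k-2})$ that you aim for does hold, via Cauchy--Schwarz on $(A^k)_{uw}=\sum_{v\in N(w)}(A^{k-1})_{uv}$. But this only settles $k=\ell$, and for $k<\ell$ you give no argument.

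The loss is not in your later estimates. It is in your very first Cauchy--Schwarz, which separates the short arc from the long arc. Take $\sim$ to be equality, $k=2$, $\ell\ge3$, and a pseudo-random $d$-regular graph with $n^{1/\ell}\ll d\ll n^{1-1/\ell}$. Since $\sum_u\deg(u)^2=nd^2$ and $\operatorname{tr}(A^{4\ell-4})\ge d^{4\ell-4}$, the right-hand side after that step is at least $\sqrt{n}\,d^{2\ell-1}$. The target is only of order $\sqrt{d}\,n^{1/(2\ell)}d^{2\ell-1}$.

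Your proposed repair does not fix this. Cutting at $x_{k+1}$ and at $x_{\ell+1}$ (the antipode of $x_1$) produces arcs of lengths $\ell-k$ and $\ell+k$, which is still unbalanced. The ``dyadic iteration'' fallback is not specified.

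\textbf{The missing idea} is to cut at the chord endpoint $w=x_{k+1}$ and at \emph{its own} antipode $z=x_{k+1+\ell}$. Both halves then have length $\ell$. The chord lies entirely in one half, as the condition that the vertex at step $k$ is related to the starting vertex $w$. Thus $N_k=\sum_{w,z}(A^\ell)_{wz}R_{wz}$ with $R_{wz}=\sum_{u\sim w}(A^k)_{wu}(A^{\ell-k})_{uz}$. Writing $H=\hom(C_{2\ell},G)$, Cauchy--Schwarz gives $N_k\le H^{1/2}\bigl(\sum_{w,z}R_{wz}^2\bigr)^{1/2}$.

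Now peel off $w$. Write $R_{wz}=\sum_{v\in N(w)}S_{wvz}$ with $S_{wvz}=\sum_{u\sim w}(A^{k-1})_{vu}(A^{\ell-k})_{uz}$, so $R_{wz}^2\le\Delta\sum_{v\in N(w)}S_{wvz}^2$. Expand $S_{wvz}^2$ over pairs $(u,u')$. The hypothesis gives $|\{w\in N(v):u\sim w,\ u'\sim w\}|\le s$, so $\sum_{w\in N(v)}S_{wvz}^2\le s\,(A^{\ell-1})_{vz}^2$. Hence $\sum_{w,z}R_{wz}^2\le s\Delta\operatorname{tr}(A^{2\ell-2})\le s\Delta n^{1/\ell}H^{1-1/\ell}$, and therefore $N_k\le(s\Delta)^{1/2}n^{1/(2\ell)}H^{1-1/(2\ell)}$ uniformly in $k$.

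\textbf{The constant also needs care.} A union bound over all $\ell(2\ell-1)$ pairs gives the constant $\ell(2\ell-1)$, which is at most $32\ell^{3/2}$ only for $\ell\le256$. To get $\ell^{3/2}$, fix only the position of one endpoint, cut there and at its antipode, and let the partner be any vertex of either half. The same computation then applies the hypothesis to at most $\ell$ candidate partners on one walk, giving $\sum R_{wz}^2\le \ell s\Delta\operatorname{tr}(A^{2\ell-2})$. This yields $2\sqrt{\ell}$ per position and $4\ell^{3/2}$ overall.
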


We can already use this to prove the result for a $K$-almost regular subgraph $G\subseteq\Gamma$ of high average degree.

\begin{lemma}\label{lemma:holds in each part}
For every integer $\ell\geq2$ and every $K\geq1$, there exist constants
$0<\varepsilon<1$ and $C_1,C'_1>0$ with the following property.
Let $\Gamma$ be a $(1-\varepsilon,t)$-sparse graph, and let $G\subseteq\Gamma$ be a $K$-almost-regular subgraph on $m$ vertices with
average degree $d$. If $G$ has at least
$ C_1t^{1-1/\ell}m^{1+1/\ell}$ edges,
then $G$ contains at least $C'_1d^{2\ell}$ copies of $C_{2\ell}$ that
are induced in $\Gamma$.
\end{lemma}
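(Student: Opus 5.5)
We combine \cref{lemma:small probability of chords}, a union bound over nonconsecutive position pairs, and \cref{lemma:many injective homs0} to remove non-injective homomorphisms. Write $e(G)=md/2$. The hypothesis $md/2\ge C_1t^{1-1/\ell}m^{1+1/\ell}$ gives $d\ge 2C_1 t^{1-1/\ell}m^{1/\ell}$. Raising to the power $\ell$ gives $d^\ell\ge (2C_1)^\ell t^{\ell-1}m$, so
\[
\left(\frac{t^{\ell-1}m}{d^\ell}\right)^{1/(\ell-1)}\le (2C_1)^{-\ell/(\ell-1)}.
\]
Taking $C_1$ large in terms of $K$ and $\ell$ makes this quantity as small as we like.

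\textbf{Step 1: few homomorphisms have chords.} There are at most $2\ell^2$ ordered pairs of nonconsecutive positions of $C_{2\ell}$. By \cref{lemma:small probability of chords}, the proportion of $\phi\in\operatorname{Hom}(C_{2\ell},G)$ with some chord in $\Gamma$ is at most
\[
2\ell^2\Bigl(\varepsilon+2K(2C_1)^{-\ell/(\ell-1)}\Bigr).
\]
Choose $\varepsilon=1/(16\ell^2)$ and $C_1$ large enough that the bracket is at most $1/(8\ell^2)$. Then at least half of all homomorphisms have no $\Gamma$-edge between nonconsecutive images. Here ``nonconsecutive'' means distinct positions that are not adjacent on the cycle. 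If two such positions had the same image $v$, there would be no chord $vv$, so such $\phi$ are not yet excluded. This is handled next.

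\textbf{Step 2: few homomorphisms are non-injective.} Apply \cref{lemma:many injective homs0} with $\sim$ equal to equality. A vertex $v$ has at most one neighbour $w$ with $u\sim w$, so $s=1$. The number of non-injective homomorphic cycles is then at most
\[
32\ell^{3/2}\Delta(G)^{1/2}m^{1/(2\ell)}\hom(C_{2\ell},G)^{1-1/(2\ell)}.
\]
We compare this to $\tfrac14\hom(C_{2\ell},G)$. This needs $\hom(C_{2\ell},G)^{1/(2\ell)}\ge 128\ell^{3/2}(Kd)^{1/2}m^{1/(2\ell)}$. By \cref{lemma:many homs}, $\hom^{1/(2\ell)}\ge d$, so it suffices that $d^{1/2}\ge 128\ell^{3/2}K^{1/2}m^{1/(2\ell)}$, i.e.\ $d\ge c_{\ell,K}m^{1/\ell}$. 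This follows from $d\ge 2C_1t^{1-1/\ell}m^{1/\ell}$ with $t\ge1$ once $C_1$ is large.

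\textbf{Step 3: counting.} At least $\tfrac14\hom(C_{2\ell},G)\ge \tfrac14 d^{2\ell}$ homomorphisms are injective and have no $\Gamma$-edges between nonconsecutive positions. Each such homomorphism is an induced $C_{2\ell}$ in $\Gamma$, since consecutive images are edges of $G\subseteq\Gamma$. Each copy of $C_{2\ell}$ arises from exactly $4\ell$ homomorphisms, so $G$ contains at least $d^{2\ell}/(16\ell)$ such copies, and we may take $C_1'=1/(16\ell)$.

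I expect the only delicate point to be Step 2. There one must check that Janzer's bound is dominated using only the lower bound $\hom\ge d^{2\ell}$ and almost-regularity, with $t\ge1$ absorbed. Step 1 is a direct union bound once the constants are chosen in the order $\varepsilon$, then $C_1$.
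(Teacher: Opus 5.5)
Your proof is correct and is essentially the paper's argument. You use Janzer's lemma with $\sim$ as equality and $s=1$ (together with $\hom(C_{2\ell},G)\ge d^{2\ell}$, $\Delta(G)\le Kd$ and $t\ge1$) to discard non-injective homomorphisms, a union bound of \cref{lemma:small probability of chords} over position pairs for chords, and division by $4\ell$.

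The only slip is the count in Step 1. There are $2\ell(2\ell-3)$ ordered nonconsecutive pairs, which exceeds $2\ell^2$ once $\ell\ge4$. However, the chord event is symmetric, so it suffices to sum over the $\ell(2\ell-3)\le 2\ell^2$ unordered pairs, and your constants still work. The paper simply uses the cruder bound $(2\ell)^2$ with $\varepsilon=1/(32\ell^2)$.
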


\begin{proof}
Fix $\ell$ and $K$.
Given a homomorphism $\phi\in\operatorname{Hom}(C_{2\ell},G)$, it can fail to be an induced cycle in $\Gamma$ in two ways:
\begin{enumerate}
    \item $\phi$ is not injective, or
    \item if its image is $x_1x_2\dots x_{2\ell}x_1$, there is an edge $x_ix_j\in E(\Gamma)$ for $i\not\equiv j\pm1\pmod{2\ell}$.
\end{enumerate}
By \cref{lemma:many homs}, $\hom(C_{2\ell},G)\geq d^{2\ell}$. Let $\sim$ be equality and take $s=1$ in \cref{lemma:many injective homs0}. Since $\Delta(G)\leq Kd$, the number of homomorphisms that are not injective is at most
\begin{align*}
&32\ell^{\frac{3}{2}}\Delta(G)^{\frac{1}{2}}m^{\frac{1}{2\ell}}
\hom(C_{2\ell},G)^{1-\frac{1}{2\ell}}\\
&\quad\leq
32\ell^{\frac{3}{2}}K^{\frac{1}{2}}d^{\frac{1}{2}}m^{\frac{1}{2\ell}}
\hom(C_{2\ell},G)^{1-\frac{1}{2\ell}}\\
&\quad\leq
32\ell^{\frac{3}{2}}K^{\frac{1}{2}}d^{-\frac{1}{2}}m^{\frac{1}{2\ell}}
\hom(C_{2\ell},G)\\
&\quad\leq
32\ell^{\frac{3}{2}}K^{\frac{1}{2}}(2C_1)^{-\frac{1}{2}}
t^{-\frac12+\frac{1}{2\ell}}
\hom(C_{2\ell},G).
\end{align*}
Since $t\geq1$, by choosing $C_1$ large enough, this is at most $\frac14\hom(C_{2\ell},G)$.
By \cref{lemma:small probability of chords}, for each pair of non-adjacent positions on the cycle, at most
\[
\left(\varepsilon+
2K\left(\frac{t^{\ell-1}m}{d^\ell}\right)^{\frac{1}{\ell-1}}\right)\hom(C_{2\ell},G)
\]
 homomorphisms have an edge in $\Gamma$ between those positions. Hence, the proportion of  homomorphisms that have a chord in $\Gamma$ is at most
\begin{align*}
(2\ell)^2\left[
\varepsilon+
2K\left(\frac{t^{\ell-1}m}{d^\ell}\right)^{\frac{1}{\ell-1}}
\right]
&\leq
(2\ell)^2\left[
\varepsilon+
2K(2C_1)^{-\frac{\ell}{\ell-1}}
\right] \leq\frac14
\end{align*}
if we choose, for example, $\varepsilon=\frac{1}{32\ell^2}$ and $C_1$ to be large enough. Hence by combining the two estimates, there are at least $\frac12\hom(C_{2\ell},G)$ injective homomorphisms from $C_{2\ell}$ to $G$ that are induced in $\Gamma$. Each copy of $C_{2\ell}$ is counted $4\ell$ times, twice per vertex, once in each direction. Therefore, there are at least
\[
\frac{1}{8\ell}\hom(C_{2\ell},G)
\geq\frac{1}{8\ell}d^{2\ell}
\]
copies of $C_{2\ell}$ in $G$ which are induced in $\Gamma$. Thus, we may take $C'_1=1/(8\ell)$.
\end{proof}

The second lemma, due to Jiang and Yepremyan, allows us to get the $K$-almost regularity, which was used in \cref{section:inducedness} and in the preceding lemma to upper bound the maximum degree.

\begin{lemma}[Lemma 3.2, \cite{jiangYepremyan2020}]
\label{lemma:regularization_citation}
Let $\alpha$ be real and $s,q$ integers where $0<\alpha<1$ and $q\geq s\geq1$. Then there exist positive reals $C_0=C_0(\alpha,s,q)$ and $K=K(\alpha,s,q)$ such that the following holds. For every $C\geq C_0$, if $G$ is an $n$-vertex graph with $e(G)\geq Cn^{1+\alpha}$, then $G$ contains a collection of edge-disjoint subgraphs $G_1,\dots,G_m$ satisfying:
\begin{itemize}
    \item for every $i\in[m]$, $G_i$ is $K$-almost-regular and satisfies
$e(G_i)\geq\frac14Cv(G_i)^{1+\alpha}$
    \item $\sum_{i=1}^m f(G_i,s,q)\geq\frac{1}{4^s}f(G,s,q),$
\end{itemize}
where $f(H,s,q)=e(H)^s/v(H)^q$.
\end{lemma}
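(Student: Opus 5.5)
The statement is cited from \cite{jiangYepremyan2020}; here is how I would reprove it. I would extract the subgraphs greedily. Set $R_0=G$. While the remaining graph $R_j$ still has $e(R_j)\ge \frac12 e(G)$, I find inside $R_j$ a $K$-almost-regular subgraph $G_{j+1}$ with $e(G_{j+1})\ge \frac14 C v(G_{j+1})^{1+\alpha}$. I then delete its edges, setting $R_{j+1}=R_j-E(G_{j+1})$, and stop once at least half the edges of $G$ are used. The pieces are edge-disjoint by construction. The whole difficulty is to choose each piece so that it is \emph{efficient} for $f$, in the sense that
\[
\frac{f(G_i,s,q)}{e(G_i)}=\frac{e(G_i)^{s-1}}{v(G_i)^q}\ \ge\ c\,\frac{e(G)^{s-1}}{n^q}=c\,\frac{f(G,s,q)}{e(G)}
\]
for a constant $c=c(\alpha,s,q)$. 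Granted this, summing over pieces whose edges total at least $\frac12 e(G)$ gives $\sum_i f(G_i,s,q)\ge \frac c2 f(G,s,q)$. A careful choice of losses should give the constant $4^{-s}$; I would not insist on it in a first pass.

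\textbf{Step 1: a dense minimum-degree core.} Inside $R_j$ I run the usual vertex-deletion argument with potential $\Phi(H)=e(H)^{s-1}/v(H)^q$, and in parallel with $e(H)/v(H)^{1+\alpha}$. I repeatedly delete a vertex whose degree is below $\eta\, e(H)/v(H)$ for a small $\eta$ depending on $s,q$. A short computation using $q\ge s$ and $(1-x)^{s-1}(1-1/v)^{-q}\ge1$ for small deletions shows that such a deletion does not decrease $\Phi$, nor the density $e/v^{1+\alpha}$. The process therefore ends at a nonempty $H$ with $\delta(H)\ge \eta\, e(H)/v(H)$. It also satisfies $\Phi(H)\ge\Phi(R_j)\ge 2^{-(s-1)}\Phi(G)$, because $v(R_j)\le n$ and $e(R_j)\ge e(G)/2$.

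\textbf{Step 2: cap the maximum degree (the main obstacle).} This is the Erdős--Simonovits regularization step, and it is where I expect the real work. I would split $V(H)$ dyadically by degree. If the vertices of degree above $K\,e(H)/v(H)$ carry a constant fraction of the edges, there are few of them, since their number is at most $2v(H)/K$. I then pass to the bipartite subgraph between the high-degree set and its neighbourhood, or to the subgraph they induce. This lowers the vertex count by a large factor while losing only a constant factor of edges. Because $\alpha<1$ and $q\ge s$, shrinking $v$ by a factor $K$ at the cost of a constant factor in $e$ increases both $e/v^{1+\alpha}$ and $\Phi$, and I then iterate. Each iteration shrinks $v$ geometrically, so the process terminates. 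When it stops, the high-degree vertices carry few edges; deleting them and re-running Step 1 yields a $K$-almost-regular graph. Throughout, $\Phi$ and the density lose only constant factors depending on $\alpha,s,q$. The condition $C\ge C_0$ is used to keep the graphs nonempty, with minimum degree large enough that the rounding losses are negligible.

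\textbf{Step 3: bookkeeping.} I would check that the density condition $e(G_i)\ge \frac14 Cv(G_i)^{1+\alpha}$ survives for every piece. This holds because $e(R_j)\ge \frac12 e(G)\ge \frac12 Cn^{1+\alpha}$ and the steps above only increase density up to constants. I would then collect the constants $K$ and $C_0$, and sum the efficiency inequality over the pieces as described at the start.
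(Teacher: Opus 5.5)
The paper does not prove this lemma; it imports it from Jiang and Yepremyan, so your argument has to stand on its own. Its outer layer is sound. If every piece satisfies $e(G_i)^{s-1}/v(G_i)^q\ge c\,e(G)^{s-1}/n^q$ and the pieces cover at least $\frac12 e(G)$ edges, then $\sum_i f(G_i,s,q)\ge \frac c2 f(G,s,q)$. Your Step 1 computation is also correct: deleting a vertex of degree below $\eta e/v$ with $\eta\le 1$ decreases none of $e/v^{1+\alpha}$, $e^{s-1}/v^q$, $e/v$.

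The gap is Step 2, which you rightly identified as the crux. Neither of your two moves shrinks the vertex set by a large factor while keeping a constant fraction of the edges. Take $H=K_{a,b}$ with $b>Ka$, which survives Step 1. The high-degree set $S$ is the $a$-side and carries every edge. Yet $H[S]$ is empty, and the bipartite graph between $S$ and $N(S)$ is $H$ itself, so your iteration makes no progress. In general $N(S)$ can be almost all of $V(H)$.

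The missing idea is to subsample the other side down to the size of $S$. Write $v=v(H)$ and $e=e(H)$, and note $|S|\le 2v/K$.
\begin{itemize}
\item If $e(S)\ge e/4$, pass to $H[S]$.
\item Otherwise, if $e(S,\overline{S})\ge e/4$, choose $T\subseteq V(H)\setminus S$ with $|T|=|S|$ maximising $e(S,T)$. Averaging gives $e(S,T)\ge e|S|/(4v)$, and you pass to the bipartite graph $H[S,T]$.
\end{itemize}
The second move loses a factor of order $|S|/v$ of the edges, which is not a constant. However, the vertex count drops by a comparable factor. So in either case $e/v^{1+\alpha}$ is multiplied by at least $2^{-3-\alpha}(v/|S|)^{\alpha}$, and $e^{s-1}/v^q$ by at least $4^{-(s-1)}2^{-q}(v/|S|)^{q-s+1}$. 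Both factors exceed $1$ once $K\ge\max(2^{2+3/\alpha},\,2^{q+2s-1})$. So what is really used is $\alpha>0$ and $q-s+1\ge 1$, not $\alpha<1$ as you suggest, and this is where $K$ acquires its dependence on $\alpha$.

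You then iterate until the edges meeting $S$ number fewer than $e/2$; each round divides $v$ by at least $K/4$, so this terminates. Then delete $S$ and rerun Step 1. The result has maximum degree at most $Ke/v$ and average degree more than $e/v$, with $e,v$ taken from the graph just before deleting $S$. Since Step 1 never decreases $e/v$, the final graph is $(2K/\eta)$-almost-regular.

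The constants also come out, which your Step 3 does not show. The only losses are a factor $2$ in density and $2^{s-1}$ in $e^{s-1}/v^q$ at this deletion, on top of the losses from passing to $R_j$. This gives exactly $e(G_i)\ge\frac14 Cv(G_i)^{1+\alpha}$ and $\sum_i f(G_i,s,q)\ge 2\cdot 4^{-s}f(G,s,q)$. Your Step 3 only tracks density ``up to constants'', which would not yield the stated factor $\frac14$.
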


We 
now have everything to prove the main result.

\begin{proof}[Proof of \cref{theorem:main theorem}]
Fix $\ell\geq2$.
Let $C_0$ and $K$ be the constants from \cref{lemma:regularization_citation} for $\alpha=1/\ell$ and $s=q=2\ell$. Let $\varepsilon$, $C_1$ and $C'_1$ be the constants from \cref{lemma:holds in each part} for this value of $K$, and take $C=\max(4C_1,C_0)$ and
$C'=\frac{C'_1}{4^{2\ell}}.$ Suppose $\Gamma$ is $(1-\varepsilon,t)$-sparse for some $t$, and has at least $Ct^{1-1/\ell}n^{1+1/\ell}$ edges. Set $D=Ct^{1-1/\ell}$. Since $t\geq1$ and $C\geq C_0$, we have $D\geq C_0$, so we may apply \cref{lemma:regularization_citation} to $\Gamma$. We obtain edge-disjoint subgraphs $G_1,\dots,G_m$. Writing $n_i=v(G_i)$ and $d_i=d(G_i)$, each $G_i$ is $K$-almost regular and
$$
e(G_i)\geq\frac{D}{4}n_i^{1+\frac{1}{\ell}}
=\frac{C}{4}t^{1-\frac{1}{\ell}}n_i^{1+\frac{1}{\ell}}
\geq C_1t^{1-\frac{1}{\ell}}n_i^{1+\frac{1}{\ell}}.
$$
Thus, we may apply \cref{lemma:holds in each part} to each $G_i$ to see that each $G_i$ contains at least $C'_1d_i^{2\ell}$ copies of $C_{2\ell}$ that are induced in $\Gamma$. Moreover, since $f(H,2\ell,2\ell)=2^{-2\ell}d(H)^{2\ell}$, the second conclusion of \cref{lemma:regularization_citation} gives
$$
\sum_{i=1}^m d_i^{2\ell}
\geq\frac{1}{4^{2\ell}}d(\Gamma)^{2\ell}.$$

As the $G_i$ are edge-disjoint, we do not double count any cycle, and so $\Gamma$ contains at least
$$
\sum_{i=1}^m C'_1d_i^{2\ell}
\geq\frac{C'_1}{4^{2\ell}}d(\Gamma)^{2\ell}
=C'd(\Gamma)^{2\ell}
\geq C'2^{2\ell}C^{2\ell}t^{2\ell-2}n^2
$$
induced copies of $C_{2\ell}$, as required.
\end{proof}
\section{Concluding Remarks}\label{section:conclusion}

In this paper, we proved \cref{theorem:main theorem}, which partially resolved \cref{problem:the problem} in the "bootstrapped" case of sufficiently locally sparse graphs.

The obvious lines for further research are to come up with a proof for general $(c,t)$-sparse graphs with sufficient edge density or to find a counterexample for some small $c$.
On the one hand, the existence result of Ding et al. \cite{ding2024induced} for any $(c,t)$-sparseness condition suggests that the supersaturation result might also be true in that case.
On the other hand, our proof uses $\varepsilon = O\left(\ell^{-2}\right)$ in a  canonical way, which may be seen as evidence for the contrary.
If there were indeed a counterexample for a small $c$, this would mean there is a threshold where we start getting enough induced cycles, which would be an interesting avenue for future study.

\subsection*{Acknowledgments} This work was started at the Young Researchers in Mathematics Program at the Bernoulli Center at EPFL in July 2026, supervised by Oliver Janzer with further input from Rik Sarkar.
We thank them as well as the other participants and the other organisers Florian Richter, Mats Stensrud and Sven Wang for the helpful and friendly environment.

\begingroup
\setlength{\bibsep}{0pt} 
\small 
\bibliographystyle{plainnat}
\bibliography{bibliography}
\endgroup

\end{document}